\theoremstyle{plain}
\newtheorem{thm}{Theorem}[section]
\newtheorem{lemma}[thm]{Lemma}
\begin{document}
\title{Intrinsic Chirality of Multipartite Graphs}
  \date{\today}
\author[E. Flapan, W.R. Fletcher]{Erica Flapan, Will Fletcher}
    \subjclass{57M25, 57M15, 92E10, 05C10}
    
    \keywords{chiral, achiral, intrinsic chirality, spatial graphs, multipartite graphs}
    
    \address{Department of Mathematics, Pomona College, Claremont, CA 91711, USA}

\address{Department of Physics, University of Cambridge, Cambridge, CB3 0DS, UK}

\thanks{The first author was supported in part by NSF grant DMS-0905087.}

\maketitle

\begin{abstract}
	We classify which complete multipartite graphs are intrinsically chiral.
\end{abstract}

\section{Introduction}

The chemical properties of molecules and interactions between molecules often depend on whether the molecules have mirror image symmetry.  A molecule is said to be $\textit{chiral}$ if it cannot transform into its mirror image, otherwise it is said to be $\textit{achiral}$.  Chemical chirality is determined experimentally, but predicting chirality based on molecular formulae and bond connectivity is important, especially in the process of designing new pharmaceuticals and other biologically active molecules.  Physical models can be used to determine whether rigid molecules will have mirror image symmetry, but large molecules pose greater difficulty, especially if they can attain numerous conformations by rotating around multiple bonds.  One method for determining the chirality of such complex molecules is to model them as topological graphs embedded in 3-dimensional space, where vertices and edges correspond to atoms and bonds.  In topology, a structure is $\textit{chiral}$ if there is no ambient isotopy taking it to its mirror image, otherwise it is $\textit{achiral}$.  If a structure is topologically achiral, then there exists an orientation reversing homeomorphism under which the structure is setwise invariant, and if it is topologically chiral then no such homeomorphism exists.  Since molecular motions do not alter bond connectivity, such motions are ambient isotopies.  Thus a molecule whose corresponding embedded graph is topologically chiral must itself be chemically chiral.  

A graph is called $\textit{intrinsically chiral}$ if all possible embeddings of the graph in 3-dimensional space are chiral.  If the graph corresponding to a molecule is intrinsically chiral, it follows that the molecule and all its stereoisomers (which have the same bond connectivity) are chemically chiral.  Intrinsic chirality has been demonstrated for several families of graphs, including complete graphs of the form $K_{4n+3}$ ($n \geq 1$)~\cite{Flapan 1992} and M\"{o}bius ladders with an odd number of rungs (at least three)~\cite{Flapan 1989}.  A number of molecules have also been shown to be intrinsically chiral, including the molecular M\"{o}bius ladder~\cite{Flapan 1993}, the Simmons-Paquette molecule~\cite{Flapan Theochem}, triple-layered naphalenophane \cite{Flapan 1998}, ferrocenophenone~\cite{Flapan 1999}, and two different fullerenes with caps~\cite{Rapenne}.  Liang and Mislow~\cite{Liang 1994} classify molecules according to a hierarchy of types of chirality and provide examples of each type, including 21 additional examples of intrinsically chiral molecules.

  While it is not hard to check that every complete bipartite graph is achirally embeddable in 3-dimensional space, the task of determining whether or not a complete multipartite graph has an achiral embedding is more complex.  In this paper, we provide such a characterization.  Note that our results imply that the intrinsically linked graph $K_{3,3,1}$ is achirally embeddable (like its cousin $K_6$), while the intrinsically knotted graph $K_{3,3,1,1}$ is intrinsically chiral (like its cousin $K_7$).

\section{Intrinsic Chirality of Multipartite Graphs}
\label{sec:multipartite}

We work in the 3-sphere $S^3=\mathbb{R}^3\cup \{\infty\}$, however the reader should note that a graph is intrinsically chiral in $S^3$ if and only if it is intrinsically chiral in $\mathbb{R}^3$.  To construct achiral embeddings, we will make use of the following Lemma.

\begin{lemma}		
	\label{lemma:embedding}~\cite{Flapan 2011}
	Let $G$ be a finite group of homeomorphisms of $S^3$ and let $\gamma$ be a graph whose vertices are embedded in $S^3$ as a set $V$ such that $G$ induces a faithful action on $\gamma$.  Let $Y$ denote the union of the fixed point sets of all of the nontrivial elements of $G$.  Suppose that adjacent pairs of vertices in $V$ satisfy the following hypotheses:
	\begin{itemize}
		\item[\emph{(a)}] If an adjacent pair is pointwise fixed by nontrivial elements $h, g \in G$, then fix$(h) =$ fix$(g)$
		\item[\emph{(b)}] No adjacent pair is interchanged by an element of $G$
		\item[\emph{(c)}] Any adjacent pair that is pointwise fixed by a nontrivial $g \in G$ bounds an arc in fix$(g)$ whose interior is disjoint from $V \cup (Y - \text{fix}(g))$
		\item[\emph{(d)}] Every adjacent pair is contained in a single component of $S^3 - Y$
	\end{itemize}
	Then there is an embedding of the edges of $\gamma$ such that the resulting embedding of $\gamma$ is setwise invariant under $G$.
\end{lemma}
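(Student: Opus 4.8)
The plan is to construct the edge embeddings explicitly, one equivariant orbit at a time, using the hypotheses to guarantee that the construction can be carried out without introducing intersections that would destroy either the embedding property or the $G$-invariance. Since $G$ acts faithfully on $\gamma$ and permutes the vertex set $V$, the set of adjacent pairs is partitioned into orbits under $G$; I would process the edges orbit by orbit, and within each orbit it suffices to embed a single representative arc and then take its images under all of $G$, provided those images are mutually disjoint (except at shared endpoints) and each is an embedded arc.

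First I would dispose of the edges whose adjacent pairs lie in a fixed-point set. By hypothesis (c), such a pair $\{u,v\}$ pointwise fixed by a nontrivial $g$ bounds an arc $\alpha$ inside $\mathrm{fix}(g)$ whose interior misses $V \cup (Y - \mathrm{fix}(g))$; I would embed the edge as this arc. Hypothesis (a) is what makes this consistent: if the pair is fixed by two nontrivial elements $h,g$, then $\mathrm{fix}(h)=\mathrm{fix}(g)$, so there is no ambiguity about which fixed-point set the arc should lie in, and the stabilizer of the arc acts on it compatibly. To get the full orbit I would apply the elements of $G$ to $\alpha$; because the interior of $\alpha$ avoids $Y - \mathrm{fix}(g)$, distinct translates either coincide or meet only at endpoints, so the orbit is a collection of disjoint embedded arcs that is setwise invariant.

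Next I would handle the edges whose adjacent pairs are \emph{not} pointwise fixed by any nontrivial element. By hypothesis (d) such a pair $\{u,v\}$ lies in a single component $W$ of $S^3 - Y$. The key point is that $G$ acts freely on $S^3 - Y$ (a point is fixed by a nontrivial element exactly when it lies in $Y$), so the restriction of the quotient map to $W$ is a covering onto its image, and the stabilizer of $W$ permutes the components freely as well. Hypothesis (b) guarantees that no nontrivial element of $G$ interchanges $u$ and $v$, so the endpoints of the edge are not swapped within its own orbit; consequently I can choose an embedded arc from $u$ to $v$ inside $W$, miss the finitely many vertices and the lower-dimensional set $Y$ by a general-position argument, and then push forward by $G$ to obtain a disjoint invariant family of arcs, one in each translate $gW$. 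The freeness of the action on $S^3 - Y$ is exactly what prevents two translates of the chosen arc from colliding.

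The main obstacle is verifying that the arcs produced for \emph{different} orbits, and the arcs living in $Y$ versus those in $S^3 - Y$, can be taken mutually disjoint except at common vertices, so that the union is a genuine embedding of $\gamma$ rather than merely an immersion. This is where general position does the heavy lifting: each edge is a $1$-complex, $Y$ is a $1$-complex (the fixed-point sets of a finite group of homeomorphisms of $S^3$ are circles or points, after possibly conjugating $G$ into a standard smooth or PL action), and $V$ is finite, so I have enough dimensions in $S^3$ to perturb the interiors of the non-fixed arcs off one another and off $Y$ while keeping the perturbation $G$-equivariant. The delicate part is making the perturbation equivariant: I would perturb a single representative in each free orbit and transport the perturbation by $G$, checking that because the action is free on the relevant region the transported perturbations do not reintroduce intersections. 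Once every orbit has been embedded disjointly in this fashion, the union of all the arcs together with $V$ is the desired embedding of $\gamma$, and it is setwise invariant under $G$ by construction since each orbit of edges was built to be $G$-invariant.
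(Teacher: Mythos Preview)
The paper does not prove this lemma at all; it is quoted verbatim from Flapan--Mellor--Naimi~\cite{Flapan 2011} and used as a black box, so there is no ``paper's own proof'' to compare against. Your sketch is along the lines of the argument in the cited source: partition the edges into $G$-orbits, embed a representative of each orbit and propagate by $G$, using hypothesis~(c) to place edges with fixed adjacent pairs inside the appropriate fixed set, hypothesis~(a) to resolve ambiguity about which fixed set, hypotheses~(b) and~(d) to place the remaining edges in the free part $S^3-Y$, and finish with an equivariant general-position argument.

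Two small points to tighten. First, your remark that ``freeness of the action on $S^3-Y$ is exactly what prevents two translates of the chosen arc from colliding'' overstates what freeness gives you: freeness at points does not by itself keep $g\alpha$ and $\alpha$ disjoint for nontrivial $g$, since $g\alpha\cap\alpha\neq\emptyset$ need not produce a fixed point. What actually works is that the quotient map $W\to W/\mathrm{Stab}(W)$ is a covering, so you choose a generic arc downstairs between the images of $u$ and $v$ (hypothesis~(b) ensures these images are distinct) and lift it; the lifts are then automatically disjoint. Second, the assertion that the fixed sets of nontrivial elements are circles or pairs of points requires either that $G$ be assumed smooth/PL from the start (as it is in every application in this paper, where $G$ is a group of isometries) or an appeal to geometrization of finite group actions; you should flag which route you are taking.
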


Define the \textit{join} $G_1+G_2$ of graphs $G_1$ and $G_2$ as the graph $G_1\cup G_2$ together with additional edges joining every vertex in $G_1$ to every vertex in $G_2$.  Then for any natural number $n$ and any graph $G$, we define $nG$ as the join of $n$ copies of the graph $G$.  For a complete multipartite graph $K_{m_1, m_2, ..., m_n}$, we will refer to the sets $m_i$ as \textit{partite sets}, and we define the {\it size} of each partite set as the number of vertices it contains.

The next three theorems present various forms of complete multipartite graphs that have an achiral embedding.  Afterward, we prove that these theorems account for all achirally embeddable complete multipartite graphs.
\bigskip

\begin{thm}
	\label{thm:multipartite_achiral_1}
	A complete multipartite graph $\gamma$ is achirally embeddable in $S^3$ if it has the form $4G_1+2G_2+G_3+K_{q_1,q_2,q_3}$ where $G_1, G_2, G_3$, and $K_{q_1,q_2, q_3}$ are (possibly empty) complete multipartite graphs such that all of the partite sets in $G_2$ have even size, all of the partite sets in $G_3$ have size divisible by 4, and one of the following conditions holds:
	
	\begin{enumerate}
		\item	Two or three of the $q_i = 0$
		
		\item	Two $q_i$ are odd and equal, and the third $q_i=0$
		\item	Two $q_i$ are odd and equal, the third $q_i \equiv 1 \pmod{4}$, and $G_1 = \emptyset$
		\item  One $q_i \equiv 2 \pmod{4}$, one $q_i \equiv 1 \pmod{4}$, and the third $q_i = 0$
		\item	Two $q_i \equiv 2 \pmod{4}$, the third $q_i=0$, and $G_1 = \emptyset$
	\end{enumerate}
\end{thm}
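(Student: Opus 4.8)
The plan is to prove each of the five cases constructively: for a graph $\gamma$ of the stated form I would build a finite group $G$ of homeomorphisms of $S^3$ containing an orientation-reversing element, place the vertices of $\gamma$ as a $G$-invariant set $V$ realizing the given partite structure, and then apply Lemma~\ref{lemma:embedding} to extend this to a $G$-invariant embedding of all of $\gamma$. Because $G$ contains an orientation-reversing homeomorphism leaving the embedding setwise invariant, the embedding is achiral. The guiding choice is to take $G$ so that $Y$, the union of the fixed-point sets of its nontrivial elements, is a union of unknotted circles and isolated points; then $S^3-Y$ is connected, so hypothesis (d) holds automatically for every adjacent pair. The prototype is $G$ cyclic of order $4$ generated by an orientation-reversing rotary reflection $\rho$, whose square $\rho^2$ is an orientation-preserving involution fixing a circle $C$, with $\text{fix}(\rho)$ a pair of points on $C$. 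The crucial point is that since $\rho$ has order $4$ rather than $2$, it can carry one partite set to another \emph{without interchanging any adjacent pair}, which is exactly what hypothesis (b) forbids of an involution.

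Next I would set up the dictionary between the summands and the $G$-orbits of $V$, always keeping (b) in view. A free orbit $\{v,\rho v,\rho^2 v,\rho^3 v\}$ may be assigned entirely to one partite set, contributing a multiple of $4$; this accounts for the $G_3$ summand and its divisibility-by-$4$ condition. Alternatively it splits as $\{v,\rho^2 v\}$ and $\{\rho v,\rho^3 v\}$ into two partite sets that $\rho$ interchanges, and a direct check shows the only interchanges occurring inside such an orbit are $v\leftrightarrow\rho^2 v$ and $\rho v\leftrightarrow\rho^3 v$, both within a single set, so (b) holds and each set has even size; this is the source of $2G_2$ and the even-size condition. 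The unconstrained summand $4G_1$ is inserted with its vertices off the fixed circles, so that no adjacent pair is ever interchanged. An odd-sized partite set, by contrast, must acquire a vertex on some fixed circle (its \emph{anchor}), since all off-circle contributions come in $\rho^2$-pairs; at most two anchors, lying in two distinct partite sets, can be placed on a single fixed circle and joined along that circle by an interior-disjoint arc as required by (c). The two equal odd values of the $q_i$ arise precisely when the orientation-reversing generator interchanges the two anchored sets, forcing them to have equal size while, by the order-$4$ mechanism above, interchanging no adjacent pair.

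The main obstacle is the case analysis (1)--(5) for the core $K_{q_1,q_2,q_3}$ together with the verification of hypotheses (a) and (c) along $Y$. The residues of the $q_i$ modulo $4$, the equalities forced in cases (2) and (3), and the requirement $G_1=\emptyset$ in cases (3) and (5) all reflect the limited supply of fixed circles and fixed points available once the even and divisible-by-$4$ summands have been seated: each additional odd or $\equiv 2\pmod 4$ partite set consumes anchoring room on $Y$, and in cases (3) and (5) that room is exhausted, leaving none for a nonempty $G_1$. I would therefore treat the five cases in turn, in each specifying which vertices lie at $\text{fix}(\rho)$, which lie on the fixed circles, and which occupy free orbits, choosing $G$ (and the number of its fixed circles) as needed so that conflicting anchors lie on distinct circles with no common fixing element. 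Checking (a) reduces to ensuring no adjacent pair is pointwise fixed by two elements with different fixed-point sets; (b) is verified orbit-by-orbit as above; (c) follows because the needed arcs can be taken along the fixed circles while avoiding the remaining vertices; and (d) is free from the choice of $G$. Assembling these placements and invoking Lemma~\ref{lemma:embedding} then yields an achiral embedding of $\gamma$ in every case.
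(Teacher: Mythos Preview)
Your overall architecture matches the paper's---a cyclic group of order $4$ generated by a rotary reflection $\rho$, with $\mathrm{fix}(\rho^2)=C$ a circle---but there is a genuine gap in your treatment of the $4G_1$ summand. You assert that ``$4G_1$ is inserted with its vertices off the fixed circles, so that no adjacent pair is ever interchanged,'' but this is false: in $4G_1$ the four copies are joined, so for every vertex $v$ in one copy the vertex $\rho^2 v$ lies in a different copy and is adjacent to $v$, yet $\rho^2$ interchanges them. Hypothesis (b) of Lemma~\ref{lemma:embedding} fails regardless of where $v$ sits relative to $C$. The paper repairs this by passing to an associated graph $\gamma'$: for each such pair one subdivides the edge $\overline{v\,\rho^2 v}$ by a valence-$2$ auxiliary vertex, embeds that auxiliary vertex on $C$, applies the lemma to $\gamma'$, and then suppresses the auxiliary vertices. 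The same device is needed in cases (2) and (3): when the two equal odd partite sets are swapped by $\rho$, their anchor vertices $v_1$ and $\rho(v_1)$ lie on $C=\mathrm{fix}(\rho^2)$, so $\rho$ acts on them with order $2$ and \emph{does} interchange this adjacent pair---your ``order-$4$ mechanism'' does not apply to points of $C$. An auxiliary vertex at a fixed point of $\rho$ is again required.

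This omission is also why your account of the constraint $G_1=\emptyset$ in cases (3) and (5) is incoherent: you say the anchoring room on $Y$ is ``exhausted, leaving none for a nonempty $G_1$,'' yet by your own description $G_1$ lives entirely off $Y$ and consumes no such room. The actual reason is that every vertex of $G_1$ forces an auxiliary vertex onto $C$, and in cases (3) and (5) the circle $C$ is already filled by a $K_{1,1,1}$ or a $K_{2,2}$ and cannot accommodate further points. Finally, the paper does not vary the group or ``the number of its fixed circles'': a single order-$4$ rotary reflection with its single fixed circle handles all five cases uniformly once the auxiliary-vertex trick is in place.
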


\begin{proof}Let $P$ be a sphere in $S^3$ containing the origin $O$ and the point $\infty$, and let $\ell$ be a circle orthogonal to $P$ that meets $P$ at $O$ and $\infty$. Let $h: S^3 \rightarrow S^3$ be the composition of a $90^\circ$ rotation around $\ell$ and reflection through $P$, and let $H$ denote the group of order 4 generated by $h$.  

 Let $\gamma$ denote a multipartite graph of the form $4G_1+2G_2+G_3+K_{q_1,q_2,q_3}$, satisfying one of the conditions (1) - (5).  We will embed the vertices of $\gamma$ as a set $V$ in $S^3$ which is setwise invariant under $H$ such that $H$ induces a faithful action on $\gamma$.  To embed the edges of $\gamma$ we will apply Lemma~\ref{lemma:embedding} after checking that hypotheses (a) - (d) of the lemma are satisfied.  Since $S^3 - \ell$ is connected, hypothesis (d) is immediate.  We will never embed a pair of adjacent vertices at $O$ and $\infty$, and hence hypothesis (a) will always be satisfied.  Thus in each case we only need to check hypotheses (b) and (c).  Furthermore, if no adjacent pair of vertices is contained in $\ell$, then hypothesis (c) is trivially satisfied.

Let $B$ denote a ball which is disjoint from $\ell$, and is small enough that $B$, $h(B)$, $h^2(B)$, and $h^3(B)$ are pairwise disjoint.   We embed the vertices of one copy of $G_1$ as a set $V_1$ of distinct points in $B$.  We embed the vertices of the three other copies of $G_1$ as $h(V_1)$, $h^2(V_1)$, and $h^3(V_1)$.  Let $4V_1$ denote the set of embedded vertices of $4G_1$.

Next we embed the vertices of $2G_2$. Since each partite set in $G_2$ has an even number of vertices, we embed half the vertices of each partite set of $G_2$ as a set of distinct points $W_2$ in $B-V_1$.  Then we embed the other half of the vertices of each of these partite sets as $h^2(W_2)$.  Then $V_2=W_2\cup h^2(W_2)$ is the set of embedded vertices of one copy of $G_2$. We embed the vertices of the second copy of $G_2$ as $h(V_2)$.  Let $2V_2$ denote the set of embedded vertices of $2G_2$.   

By hypothesis, each partite set in $G_3$ has size divisible by 4.  We embed one fourth of the vertices of each partite set of $G_3$ as a set $W_3$ of distinct points in $B-(V_1\cup W_2)$.  Then embed the remaining vertices of each partite set in $G_3$ as the images of $W_3$ under $H$ in such a way that each partite set is setwise invariant under $H$.  Let $V_3$ denote the set of embedded vertices of $G_3$. 

Thus we have embedded the vertices of  $4G_1+2G_2+G_3$ as the set of points $4V_1\cup 2V_2\cup V_3$, which is setwise invariant under $H$.  Furthermore, $H$ induces a faithful action on $\gamma$.   For each $q_i$, let $Q_i$ denote the set of $q_i$ vertices in $K_{q_1,q_2,q_3}$.  In each of the following cases, we embed $Q_1$, $Q_2$, and $Q_3$ and then use Lemma~\ref{lemma:embedding} to embed the edges of $\gamma$.  After embedding each set $Q_i$ we will abuse notation and refer to the embedded set of vertices as $Q_i$.

\medskip

\noindent\textbf{Case 1a}: All three $q_i = 0$
\medskip

In this case, we have already embedded all of the vertices of $\gamma$.  Observe that $h^2$ is the only element of $H$ that interchanges any pair of vertices.  For $v \in 2V_2 \cup V_3$, the vertices $h^2(v)$ and $v$ are in the same partite set, and hence are not adjacent.  However, for $v \in 4V_1$, the pair $v$ and $h^2(v)$ are in different partite sets, and thus are adjacent.  

In order to avoid violating hypothesis (b) of Lemma~\ref{lemma:embedding} we define an {\it associated} graph $\gamma'$ obtained from $\gamma$ by adding a vertex of valence 2 in the interior of every edge $\overline{vh^2(v)}$ with $v \in V_1\cup h(V_1)$.  We will refer to these valence 2 vertices as \textit{auxiliary} vertices.  For each $v\in V_1$ we embed a distinct auxiliary vertex $v'$ for the edge $\overline{vh^2(v)}$ in $\ell-\{O,\infty\}$, and we embed an auxiliary vertex for the edge $\overline{h(v)h^3(v)}$ as $h(v')$.  Thus we have embedded the vertices of $\gamma'$ such that $H$ induces a faithful action on $\gamma'$ and no pair of adjacent vertices is interchanged by an element of $H$.  Thus hypothesis (b) is satisfied.

Since no pair of adjacent vertices is embedded in $\ell$, hypothesis (c) is trivially satisfied.  It follows that the hypotheses of Lemma~\ref{lemma:embedding} are satisfied by the embedded vertices of $\gamma'$.  Hence we can apply Lemma~\ref{lemma:embedding} to obtain an embedding of the edges of $\gamma'$ such that the resulting embedding of $\gamma'$ is setwise invariant under $H$.  By omitting the auxilliary vertices we obtain an achiral embedding of $\gamma$.

\medskip

\noindent\textbf{Case 1b}: Precisely two $q_i = 0$
\medskip

Without loss of generality we assume that $q_1 \not = 0$ and $q_2 = q_3 = 0$.  If $q_1=2m$, we embed $m$ additional vertices in $\ell-\{O,\infty\}$, and embed the other $m$ as the image of the first $m$ under $h$.  If $q_1=2m+1$, we embed $2m$ vertices as above and embed an additional vertex at the point $O$.  Now the embedded vertices of $\gamma$ are setwise invariant under $h$, and $H$ induces a faithful action on $\gamma$.

 To satisfy hypothesis (b), we define the associated graph $\gamma'$ as in Case 1a.  For each $v\in V_1$ we embed an auxiliary vertex $v'$ for the edge $\overline{vh^2(v)}$ in $\ell-(\{O,\infty\}\cup Q_1)$, and we embed an auxiliary vertex for the edge $\overline{h(v)h^3(v)}$ as $h(v')$.  No vertex in $Q_1$ is interchanged with an adjacent vertex by an element of $H$, so no additional auxiliary vertices are required. Since no pair of adjacent vertices is embedded in $\ell$, hypothesis (c) is again trivially satisfied.  Now the hypotheses of Lemma~\ref{lemma:embedding} are satisfied, so again we can embed the edges of $\gamma'$ and omit the auxiliary vertices to obtain an achiral embedding of $\gamma$.\medskip

\noindent\textbf{Case 2}:	Two $q_i$ are odd and equal, and the third $q_i=0$

\medskip

Without loss of generality, we assume that $q_1 = q_2 = 2k+1$ and $q_3 = 0$.  We embed $k$ vertices of $Q_1$ as distinct points in $B-(V_1\cup W_2\cup W_3)$.  Then embed the other $k$ vertices of $Q_1$ as the image of the first $k$ vertices under $h^2$.   We embed the last vertex of $Q_1$ as a point $v_1$ on $\ell-\{O,\infty\}$. Now embed the vertices of $Q_2$ as $h(Q_1)$.  Observe that even though the sets $Q_1$ and $Q_2$ are interchanged by $h$, the only adjacent vertices in $Q_1\cup Q_2$ that are interchanged by an element of $H$ are $v_1$ and $h(v_1)$.

 Let $A$ be the arc in $\ell$ from $v_1$ to $h(v_1)$ which contains $O$.  For each $v\in V_1$ we embed an auxiliary vertex $v'$ for the edge $\overline{vh^2(v)}$ in $\ell-(A\cup \{\infty\})$, and we embed an auxiliary vertex for the edge $\overline{h(v)h^3(v)}$ as $h(v')$.  Finally, we embed the auxiliary vertex corresponding to the edge $\overline{v_1h(v_1)}$ as the point $O$.  Since $v_1$ and $h(v_1)$ are adjacent to the auxiliary vertex at $O$ and are fixed by $h^2$, we must check that hypothesis (c) is satisfied.  Observe that the two subarcs $A - \{O\}$ satisfy this property.  Now the hypotheses of Lemma~\ref{lemma:embedding} are satisfied for $\gamma'$, so again we can embed the edges of $\gamma'$ and omit the auxiliary vertices to obtain an achiral embedding of $\gamma$.
 
\medskip
 
\noindent\textbf{Case 3}:	Two $q_i$ are odd and equal, the third $q_i \equiv 1 \pmod{4}$, and $G_1 = \emptyset$.

\medskip

Without loss of generality, we assume $q_1 = q_2 = 2k+1$ and $q_3 = 4j+1$.  We embed $Q_1$ and $Q_2$ as in Case 2.  We embed $j$ vertices of $Q_3$ in $B-(W_2\cup W_3\cup Q_1)$, and embed the other $3j$ vertices as their images under $H$.  We let $v_3$ denote the final vertex of $Q_3$, which we embed at $\infty$. 

  Since $G_1$ is empty, $v_1$ and $h(v_1)$ are the only adjacent vertices interchanged by any element of $H$.  We embed a single auxiliary vertex for the edge $\overline{v_1h(v_1)}$ at $O$.  Thus hypothesis (b) is satisfied.  Since each vertex on $\ell$ has been embedded between the two vertices it is adjacent to, hypothesis (c) is also satisfied.   Now the hypotheses of Lemma~\ref{lemma:embedding} are satisfied, so again we can embed the edges of $\gamma'$ and omit the auxiliary vertex to obtain an achiral embedding of $\gamma$.
\medskip

\noindent\textbf{Case 4}: One $q_i$ satisfies $q_i \equiv 2 \pmod{4}$, one $q_i$ satisfies $q_i \equiv 1 \pmod{4}$, and the third $q_i = 0$
\medskip

Without loss of generality, we assume that $q_1 = 4k+2$, $q_2 = 4j+1$, and $q_3 = 0$.   First we embed $k$ vertices of $Q_1$ and $j$ vertices of $Q_2$ as disjoint sets of points in $B-(V_1\cup W_2\cup W_3)$.  Then we embed the other three sets of $j$ and $k$ vertices of $Q_1$ and $Q_2$ by taking the images under $H$ of the $k$ and $j$ vertices we have embedded in $B$.  We embed one more vertex of $Q_1$ as $v_1$ on $\ell-\{O,\infty\}$.  Then we embed the last vertex of $Q_1$ as $h(v_1)$.  Finally, the last vertex of $Q_2$ we embed at the point $O$. 

  Let $A$ be the arc in $\ell$ from $v_1$ to $h(v_1)$ which contains $O$.  For each $v\in V_1$ we embed an auxiliary vertex $v'$ for the edge $\overline{vh^2(v)}$ in $\ell-(A\cup \{\infty\})$, and we embed an auxiliary vertex for the edge $\overline{h(v)h^3(v)}$ as $h(v')$.  No vertex in $Q_1\cup Q_2$ is interchanged with an adjacent vertex by an element of $H$, so hypothesis (b) is satisfied without any additional auxiliary vertices.  Since vertices $v_1$ and $h(v_1)$ are adjacent to the vertex at $O$ and are fixed by $h^2$, we have to check hypothesis (c) of Lemma~\ref{lemma:embedding}.  However, the subarcs $A-\{O\}$ satisfy the required property.  Now the hypotheses of Lemma~\ref{lemma:embedding} are satisfied, so again we can embed the edges of $\gamma'$ and omit the auxiliary vertices to obtain an achiral embedding of $\gamma$.

\medskip

\noindent\textbf{Case 5}: Two $q_i$ satisfy $q_i \equiv 2 \pmod{4}$, the third $q_i=0$, and $G_1 = \emptyset$
\medskip

Without loss of generality, we assume $q_1 = 4k+2$, $q_2 = 4j+2$, and $q_3 = 0$.  We embed the vertices of $Q_1$ and $4j$ vertices of $Q_2$ as in Case 4.  The last two vertices of $Q_2$ are embedded at $O$ and $\infty$.  

Since $Q_1$ and $Q_2$ are setwise invariant under $h$, none of their vertices are interchanged with adjacent vertices by any element of $H$.  Now because $G_1$ is empty, hypothesis (b) is satisfied without any auxiliary vertices.  Since each vertex on $\ell$ is embedded between the two others it is adjacent to, hypothesis (c) is also satisfied.  It then follows from Lemma~\ref{lemma:embedding} that we can embed the edges of $\gamma$ to obtain an achiral embedding of $\gamma$.  \end{proof}

\bigskip

\begin{thm}	
	\label{thm:multipartite_achiral_2}
	A complete multipartite graph $\gamma$ is achirally embeddable in $S^3$ if it has the form $G+K_{q_1,q_2,q_3,q_4}$ where $G$ and $K_{q_1,q_2,q_3,q_4}$ are (possibly empty) complete multipartite graphs, all of the partite sets in $G$ have even size, and one of the following conditions holds:
	
	\begin{enumerate}
		\item All $q_i = 0$
		\item One $q_i$ is odd, and all other $q_i = 0$
		\item Two $q_i = 1$, one $q_i=0$, and the last $q_i$ is either odd or 0
		\item All $q_i = 1$
	\end{enumerate}
\end{thm}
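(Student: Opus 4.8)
The plan is to realize the achiral embedding using a single orientation-reversing involution, in the spirit of the proof of Theorem~\ref{thm:multipartite_achiral_1} but with a group of order $2$ rather than order $4$. As before, let $P$ be a sphere through the origin $O$ and $\infty$, and let $\ell$ be a circle orthogonal to $P$ meeting it at $O$ and $\infty$. Let $f\colon S^3\to S^3$ be the composition of the $180^\circ$ rotation about $\ell$ with the reflection through $P$; this is an orientation-reversing involution whose fixed-point set is precisely $\{O,\infty\}$, and we let $F=\langle f\rangle$. Since $F$ has a single nontrivial element, hypothesis (a) of Lemma~\ref{lemma:embedding} holds vacuously, and since $S^3-\{O,\infty\}$ is connected, hypothesis (d) is immediate. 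Thus in every case it will remain only to arrange that the embedded vertices of an associated graph $\gamma'$ satisfy hypotheses (b) and (c).

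First I would embed the vertices of $G$. Because every partite set of $G$ has even size, I place half of the vertices of each partite set as a set $W$ of points in a ball disjoint from $\{O,\infty\}$ and the other half as $f(W)$; this makes the vertex set of $G$ invariant under $F$ with no vertex on the fixed-point set, and since $f(v)$ always lies in the same partite set as $v$, no adjacent pair of $G$-vertices is interchanged by $f$. The remaining work is to embed $K_{q_1,q_2,q_3,q_4}$, and here the key observation is that a partite set of odd size must contribute a vertex fixed by $f$, while a singleton partite set $\{a\}$ must be mapped by $f$ either to itself (forcing $a\in\{O,\infty\}$) or onto another singleton. Since $f$ has exactly the two fixed points $O$ and $\infty$, these two points are the only resource for accommodating fixed and auxiliary vertices, and I expect this to be exactly what dictates conditions (1)--(4).

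I would then treat the four cases in turn, as in Theorem~\ref{thm:multipartite_achiral_1}. In case (1) there is nothing further to embed. In case (2) the lone odd partite set contributes one vertex, which I place at $O$, with its remaining vertices embedded as $f$-paired points off $\{O,\infty\}$; as these pairs lie in a common partite set, no adjacent pair is interchanged and hypothesis (b) holds without auxiliary vertices. In cases (3) and (4) the singleton partite sets are the source of the difficulty: two singletons $a,b$ lying in different partite sets are adjacent, so $f$ cannot interchange them without violating hypothesis (b). I would resolve this exactly as in Case~1a of Theorem~\ref{thm:multipartite_achiral_1}, by passing to the associated graph $\gamma'$: subdivide the edge $\overline{ab}$ with an auxiliary vertex placed at a fixed point of $f$ (say $O$), so that $f$ carries the path $a$--$O$--$b$ to $b$--$O$--$a$ and $\{a,b\}$ is no longer an adjacent interchanged pair. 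In case (3) the two singletons are handled by an auxiliary vertex at $O$, and the vertex of the (possibly present) odd partite set is placed at $\infty$; in case (4), which is $K_4$ joined to $G$, the four singletons split into two $f$-interchanged pairs, handled by auxiliary vertices at $O$ and at $\infty$.

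The main obstacle, and the crux of the verification, is hypothesis (c): any adjacent pair pointwise fixed by $f$ would have to bound an arc in the two-point set $\{O,\infty\}$, which is impossible. I would therefore check in each case that the only vertices placed at $O$ and $\infty$ are either auxiliary vertices (which have valence $2$ and are adjacent only to the endpoints of the edge they subdivide) or the single vertex of an odd partite set, and that no two such fixed vertices are adjacent to one another. Since in every case the two fixed points carry non-adjacent vertices, hypothesis (c) is satisfied vacuously. With (a)--(d) verified for $\gamma'$, Lemma~\ref{lemma:embedding} produces an $F$-invariant embedding of the edges of $\gamma'$, and deleting the auxiliary vertices yields an embedding of $\gamma$ invariant under the orientation-reversing involution $f$, hence achiral. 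The delicate point throughout is bookkeeping the two fixed points: the fact that there are exactly two is precisely what forces the four partite sets and the numerical conditions (1)--(4), so the construction simultaneously produces the embeddings and explains the shape of the hypotheses.
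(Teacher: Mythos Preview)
Your proposal is correct and essentially identical to the paper's proof. The involution you construct---a $180^\circ$ rotation about $\ell$ composed with reflection through $P$---is precisely the antipodal map $x\mapsto -x$ on $\mathbb{R}^3$ extended to $S^3$, which is what the paper calls ``inversion through the origin''; the casework, the placement of the even partite sets in $f$-paired halves, the use of $O$ and $\infty$ for fixed and auxiliary vertices, and the verification of hypotheses (b) and (c) all mirror the paper's argument (with only cosmetic differences such as swapping the roles of $O$ and $\infty$ in case~(3)).
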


\begin{proof}
Let $h: S^3 \rightarrow S^3$ be an inversion through the origin $O$, which fixes $O$ and $\infty$.  Then $h$ is an orientation reversing homeomorphism whose fixed point set is $\{O, \infty\}$.  Let $H$ denote the group of order 2 generated by $h$.  Let $P$ denote a sphere that contains $\{O, \infty\}$.  Let $\gamma$ denote a complete multipartite graph of the form $G + K_{q_1,q_2,q_3,q_4}$ satisfying one of the conditions (1) - (4).

Since $S^3 - \{0,\infty\}$ is connected, hypothesis (d) of Lemma~\ref{lemma:embedding} holds for any embedding of the vertices of $\gamma$.  Hypothesis (a) must also hold because $H$ has only one nontrivial element.  Additionally, since fix($h$) = $\{O, \infty\}$, hypothesis (c) will hold as long as we do not embed a pair of adjacent vertices at $O$ and $\infty$.  Thus we only need to check hypothesis (b) for each embedding of the vertices.

Each partite set in $G$ has an even number of vertices, so we embed half the vertices of each partite set in one component of $S^3 - P$ as $W$.  Next we embed the other half of the vertices of $G$ as $h(W)$.  Then $V_1 = W \cup h(W)$ is the set of embedded vertices of $G$.  Note that each partite set in $V_1$ is setwise invariant under $h$, so no vertex in $V_1$ is interchanged with an adjacent vertex by $h$ and thus the vertices in $V_1$ satisfy hypothesis (b).

For each $i$, let $Q_i$ denote the partite set with $q_i$ vertices.  After embedding each $Q_i$ we will abuse notation and refer to the embedded set of vertices also as $Q_i$.

\medskip

\noindent\textbf{Case 1}: All $q_i = 0$

\medskip

In this case, we have already embedded all of the vertices of $\gamma$.  Since all the hypotheses of Lemma~\ref{lemma:embedding} hold, it follows that there is an embedding of the edges of $\gamma$ such that the resulting embedding of $\gamma$ is setwise invariant under $H$.  Thus $\gamma$ is achirally embeddable.

\medskip

\noindent\textbf{Case 2}: One $q_i$ is odd and all the other $q_i = 0$

\medskip
 
Without loss of generality, we assume $q_1 = 2k+1$ and $q_2 = q_3 = q_4 = 0$.  We embed $k$ of these vertices in one component of $S^3 - (P\cup V_1)$ and another $k$ vertices as the image of the first $k$ vertices under $h$.  The final vertex is embedded at $O$.  Thus $Q_1$ is setwise invariant under $h$.  Furthermore, $H$ induces a faithful action on $\gamma$, and hypothesis (b) holds. Thus Lemma~\ref{lemma:embedding} provides an achiral embedding of $\gamma$.

\medskip

\noindent\textbf{Case 3}: Two $q_i = 1$, one $q_i=0$, and the last $q_i$ is either odd or 0

\medskip

If precisely one $q_i=0$, we assume $q_1 = 2k+1$, $q_2 = q_3 = 1$, and $q_4 = 0$.  We embed $Q_1$ as in case 2.   Embed the single vertex of $Q_2$ as a point $v_2$ on $P-\{O,\infty\}$ and embed the single vertex of $Q_3$ as $h(v_2)$.  Consider the associated graph $\gamma'$ obtained from the abstract graph $\gamma$ by adding an auxiliary vertex on the edge $\overline{v_2h(v_2)}$, which we embed at $\infty$.   Then $H$ induces a faithful action on $\gamma'$, and $\gamma'$ satisfies the hypotheses of Lemma~\ref{lemma:embedding}.  Thus we can embed the edges of $\gamma'$ and then omit the auxiliary vertex to obtain an embedding of $\gamma$ that is invariant under $H$ and is therefore achiral.

If two $q_i=0$, we assume $q_1 = q_4 = 0$, and we embed $Q_2$ and $Q_3$ as above. 
 \medskip

\noindent \textbf{Case 4}: All $q_i = 1$

\medskip

Embed the single vertices of $Q_1$ and $Q_3$ as distinct points $v_1$ and $v_3$ on $P-\{O,\infty\}$ such that $h(v_1)\not =v_3$.  Then embed the single vertices of $Q_2$  and $Q_4$ as the points $h(v_1)$ and $h(v_3)$.  Then $h$ interchanges $Q_1$ with $Q_2$, and $Q_3$ with $Q_4$, so $H$ induces a faithful action on $\gamma$.

Consider the associated graph $\gamma'$ obtained from the abstract graph $\gamma$ by adding auxiliary vertices on the edges $\overline{v_1h(v_1)}$ and $\overline{v_3h(v_3)}$.  We embed the auxillary vertices at $O$ and $\infty$. Then $H$ induces a faithful action on $\gamma'$, and $\gamma'$ satisfies the hypotheses of Lemma~\ref{lemma:embedding}.  Thus we can embed the edges of $\gamma'$ and then omit the auxiliary vertices to obtain an embedding of $\gamma$ that is invariant under $H$ and is therefore achiral.  \end{proof}

\bigskip

For the next result we need the following definition.  A planar graph is called {\it outerplanar} if its join with a single vertex is still a planar graph.  
\bigskip

\begin{thm}	
	\label{thm:multipartite_achiral_3}
	A graph $\gamma$ is achirally embeddable in $S^3$ if it has the form $G_P + Q$ where one of the following conditions holds: 
	\begin{enumerate}
	
	\item $G_P$ is a planar graph and $Q$ is $K_{1,1}$, $K_{2,2}$, or a set containing an even number of vertices. 
	
	\item  $G_P$ is an outerplanar graph and $Q$ is a set containing an odd number of vertices. 
	
	\end{enumerate}
\end{thm}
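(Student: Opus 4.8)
The plan is to imitate the strategy of Theorems~\ref{thm:multipartite_achiral_1} and~\ref{thm:multipartite_achiral_2}: choose an orientation-reversing homeomorphism $h$, embed the vertices so that the hypotheses of Lemma~\ref{lemma:embedding} hold, and then let the lemma produce the invariant edge embedding. The natural choice here is to let $P$ be a $2$-sphere in $S^3$ and let $h$ be the (orientation-reversing) reflection through $P$, so that $H=\langle h\rangle$ has order $2$ and $\mathrm{fix}(h)=Y=P$. Because $H$ has a single nontrivial element, hypothesis (a) is automatic, and the planarity of $G_P$ is used to place all of $G_P$ on $P$: since a planar graph embeds in the $2$-sphere, we embed $G_P$ in $P=\mathrm{fix}(h)$, and then each $G_P$-edge is a pointwise-fixed adjacent pair that bounds an arc in $P$ disjoint from the remaining vertices, giving hypothesis (c). Every other edge will run from a vertex of $G_P$ on $P$ into one of the two balls of $S^3-P$, so it has a single endpoint on $Y$ and lies in the closure of one component, which is what we need for (d) (exactly as in the analogous situation in Theorem~\ref{thm:multipartite_achiral_2}). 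Thus in every case the real work is to position the vertices of $Q$ so that hypothesis (b) holds and no edge runs between the two balls.

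When $Q$ is an independent set of even size $2m$ (the first part of condition (1)), I would embed $m$ of its vertices in one ball $B_1$ and the remaining $m$ as their images in $B_2=h(B_1)$. Since $Q$ is independent, a vertex and its $h$-image lie in the same partite set and are not adjacent, so (b) holds and no edge of $\gamma$ crosses $P$; applying Lemma~\ref{lemma:embedding} gives the achiral embedding (when $m=0$ the graph is planar and hence trivially achiral). When $Q=K_{1,1}$, I would place its two vertices as $a\in B_1$ and $b=h(a)\in B_2$. Now $a$ and $b$ are adjacent and interchanged by $h$, and the edge $\overline{ab}$ crosses $P$, so as in Theorem~\ref{thm:multipartite_achiral_1} I pass to the associated graph $\gamma'$ by inserting a single auxiliary vertex $m$ at the point where $\overline{ab}$ meets $P$. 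Then $h$ fixes $m$ and swaps $a,b$, no adjacent pair of $\gamma'$ is reversed, and each of $\overline{am}$, $\overline{mb}$ lies in the closure of one ball.

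The delicate case is $Q=K_{2,2}$, since every vertex of one partite set is adjacent to every vertex of the other; placing the four vertices symmetrically in the two balls inevitably produces edges running from $B_1$ to $B_2$, which would violate (d). The key idea is to embed the $4$-cycle $K_{2,2}$ as a round circle $C$ orthogonal to $P$, meeting $P$ in two points that lie in the interiors of two opposite edges of the cycle, with the four vertices placed off $P$ (two in $B_1$ and two in $B_2$) and $C$ invariant under $h$. Then exactly two of the four edges cross $P$, and these are precisely the two edges that $h$ reverses; inserting an auxiliary vertex at each of the two points of $C\cap P$ repairs (b) and splits each crossing edge into two sub-edges, one in each ball. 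The remaining two edges of the cycle lie one in each ball, and the two auxiliary vertices on $P$ are not adjacent to $G_P$, so the only pointwise-fixed adjacent pairs are still the edges of $G_P$ and hypothesis (c) again needs only that $G_P$ be planar. Hence Lemma~\ref{lemma:embedding} applies.

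Finally, for condition (2) of the theorem, $Q$ is an independent set of odd size $2m+1$, and after pairing up $2m$ of its vertices in $B_1$ and $B_2$ there remains one unpaired vertex $v_0$ that must be placed at a fixed point, hence on $P$. Since $v_0$ is joined to every vertex of $G_P$ and these edges must be realized on $P=\mathrm{fix}(h)$ for hypothesis (c), we are forced to embed $G_P$ together with the apex $v_0$ in the $2$-sphere $P$; this is possible precisely because $G_P+K_1$ is planar, i.e.\ because $G_P$ is \emph{outerplanar}. (When $m=0$ the graph $G_P+K_1$ is itself planar and hence trivially achiral.) This is exactly where the weaker hypothesis of part (1) fails and the outerplanarity hypothesis of part (2) is needed, so I expect the two genuine obstacles to be the routing of the $K_{2,2}$-edges in part (1) and the identification of outerplanarity as the precise condition allowing the single fixed vertex in part (2).
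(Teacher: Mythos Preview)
Your proposal is correct and takes essentially the same approach as the paper: reflect through a $2$-sphere $P$, embed $G_P$ on $P$, place the vertices of $Q$ symmetrically in the two complementary balls, and invoke outerplanarity precisely when a single fixed $Q$-vertex must land on $P$. The only difference is tactical, in the handling of $Q=K_{1,1}$ and $Q=K_{2,2}$: you insert auxiliary vertices on the reversed $Q$-edges and apply Lemma~\ref{lemma:embedding} to all of $\gamma'$, whereas the paper introduces a circle $\ell$ orthogonal to $P$, places the $Q$-vertices on $\ell$ (alternating, in the $K_{2,2}$ case), applies the lemma only to the \emph{join} edges between $Q$ and $G_P$, and then simply observes that the $G_P$-edges already lie in $P$ and the $Q$-edges already lie in $\ell$ --- so no auxiliary vertices are needed and hypothesis~(b) never comes into play for the $Q$-edges.
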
	

\begin{proof}
Let $P$ denote a sphere passing through the origin $O$ and $\infty$, and let $\ell$ denote a circle orthogonal to $P$ passing through $O$ and $\infty$.  Define $h$ as a reflection across the sphere $P$, so the fixed point set of $h$ is $P$.  Let $H$ denote the group of order 2 generated by $h$.  
\medskip

\noindent \textbf{Case 1}: $G_P$ is a planar graph and $Q$ is $K_{1,1}$, $K_{2,2}$, or a set containing an even number of vertices. 
\medskip

Embed $G_P$ in $P$.  If $Q$ is a set of $2k$ vertices, embed $k$ vertices in one component of $\ell-\{O,\infty\}$ and the other $k$ as their image under $h$.  Let $\Gamma$ denote the vertices of $\gamma$ together with the edges joining $Q$ and $G_P$.  Observe that $H$ induces a faithful action on $\Gamma$, and $\Gamma$ satisfies all four hypotheses of Lemma~\ref{lemma:embedding}.  This yields an embedding of the edges between $Q$ and $G_P$ that is setwise invariant under $h$.  The union of this embedding of $\Gamma$ and our original embedding of $G_P$ in $P$ is an achiral embedding of $\gamma$.

If $Q$ is $K_{1,1}$ or $K_{2,2}$, we embed the vertices of $Q$ and the edges joining $Q$ and $G_P$ as we did above when $Q$ was even, making sure in the $K_{2,2}$ case that the vertices of the two partite sets alternate around $\ell$.  The remaining edges of $Q$ are contained in $\ell$.  Thus again we have an achiral embedding of $\gamma$.
\medskip

\noindent \textbf{Case 2}:  $G_P$ is an outerplanar graph and $Q$ is a set containing an odd number of vertices. 
\medskip

 We embed the join of $G_P$ and one vertex of $Q$ in the plane $P$.  Then embed the remaining $2k$ vertices of $Q$ and the edges as we did when $Q$ was even.  This gives us an achiral embedding.\end{proof}
\bigskip

An immediate consequence of Theorem~\ref{thm:multipartite_achiral_3} is that every complete bipartite graph is achirally embeddable, since any set of vertices is an outerplanar graph.

We now prove the converse of Theorem~\ref{thm:multipartite_achiral_1}, Theorem~\ref{thm:multipartite_achiral_2}, and Theorem~\ref{thm:multipartite_achiral_3}.
\bigskip

\begin{thm}
\label{thm:multipartite_chiral}
Let $\gamma$ be a complete multipartite graph that has an achiral embedding in $S^3$.  Then $\gamma$ can be expressed in one of the forms given in Theorem~\ref{thm:multipartite_achiral_1}, Theorem~\ref{thm:multipartite_achiral_2}, or Theorem~\ref{thm:multipartite_achiral_3}.

\end{thm}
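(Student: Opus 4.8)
The plan is to start from the achiral embedding, extract a finite-order symmetry, and then read off the partite-set data from its orbit structure. Since $\gamma$ is achirally embedded, there is an orientation-reversing homeomorphism $f$ of $S^3$ carrying the embedded graph to itself; by the same finite-order techniques used to classify the intrinsically chiral complete graphs \cite{Flapan 1992}, we may assume $f$ has finite order and, after an ambient isotopy of the embedding, that $f$ is an orthogonal transformation of $S^3$. Writing the order of $f$ as $2^k q$ with $q$ odd and replacing $f$ by $f^q$, we may further assume $f$ has order $2^k$, since an odd power of an orientation-reversing map is still orientation-reversing. Because $f$ preserves adjacency and non-adjacency, it induces an automorphism of $\gamma$ that permutes the partite sets while preserving their sizes; the whole argument is then to convert the constraints on the cycle structure of this permutation, together with the Smith-theoretic description of $\mathrm{fix}(f)$ and $\mathrm{fix}(f^2)$, into the numerical conditions of the three theorems.

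Next I would split into the three possible geometric types. If $k=1$ then $f$ is an orientation-reversing involution, so $\mathrm{fix}(f)$ is either a $2$-sphere $P$ or a pair of points $S^0=\{O,\infty\}$. In the sphere case I would place the fixed vertices on $P$; since $P$ bounds a ball on each side and $f$ interchanges the two sides, every partite set either lies on $P$ or splits into $f$-swapped pairs, so the vertices on $P$ form a graph embedded in a $2$-sphere, hence planar, while the remaining vertices form a set $Q$ of even size. The odd case, in which one vertex of $Q$ must sit on $P$, is precisely what forces $G_P$ to be outerplanar, recovering Theorem~\ref{thm:multipartite_achiral_3}. In the $S^0$ case at most two vertices are fixed (one at $O$ and one at $\infty$) and all other vertices occur in $f$-swapped pairs; tracking how the partite sets can be swapped or fixed, and how many can carry a fixed vertex, yields the four-block form $G+K_{q_1,q_2,q_3,q_4}$ with $G$ even and the small residues of Theorem~\ref{thm:multipartite_achiral_2}.

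For $k\ge 2$ the unique involution $f^{2^{k-1}}$ is orientation-preserving, so by the solution of the Smith conjecture $\mathrm{fix}(f^2)$ is an unknotted circle $\ell$ (or is empty) and $f$ acts on $\ell$ as a reflection fixing $O$ and $\infty$. Here the key computation is that any vertex off $\ell$ lies in an $\langle f\rangle$-orbit whose size is a power of $2$ at least $4$, while vertices on $\ell-\{O,\infty\}$ occur in $f$-swapped (hence same-partite-set) pairs, and at most one vertex can sit at each of $O,\infty$. Partite sets disjoint from $\ell$ therefore fall into orbits that assemble into the blocks $4G_1$, $2G_2$, and $G_3$; the flexibility of $G_1,G_2,G_3$ lets a single $\langle f\rangle$-orbit of size $2^j\ge 4$ be absorbed, so that no larger $2$-power order escapes Theorem~\ref{thm:multipartite_achiral_1}, and this forces the stated divisibility by $4$, by $2$, and arbitrariness, respectively. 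The partite sets meeting $\ell$ contribute the at most three special blocks $K_{q_1,q_2,q_3}$, and a residue count of their on-$\ell$ and off-$\ell$ vertices against the fixed points $O,\infty$ produces the five numerical alternatives of Theorem~\ref{thm:multipartite_achiral_1}.

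\textbf{The main obstacle} I expect is the last step in each case: the delicate bookkeeping that bounds the number of partite sets meeting the fixed set (three in the order-$4$ case, four in the inversion case) and pins down the exact residues modulo $4$, together with the side conditions $G_1=\varnothing$. One must show, for instance, that a partite set with $4k$ off-$\ell$ vertices can carry only a controlled number of on-$\ell$ vertices without forcing an adjacent pair to be interchanged by $f$, which is what distinguishes the $q_i\equiv 1$ from the $q_i\equiv 2\pmod 4$ possibilities and dictates when $G_1$ must be empty. Making the reduction to order $2^k$ and the orthogonalization fully rigorous, and verifying that the $k\ge 3$ orbit sizes genuinely collapse into the $k=2$ form rather than producing a new family, is the part that will require the most care.
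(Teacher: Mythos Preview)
Your overall strategy matches the paper's closely: reduce to a finite-order orientation-reversing homeomorphism of $2$-power order, then split on whether the fixed set is a sphere or two points, and in the latter case on whether the order is $2$ or a multiple of $4$. Two concrete gaps, however, would make the argument fail as written.

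First, the reduction to finite order is not free: it requires $3$-connectedness and is the content of \cite{Flapan 1995}, not \cite{Flapan 1992}. The paper disposes of the non-$3$-connected case separately at the start. Second, in the sphere case you assert that the off-$P$ vertices form a single partite set $Q$, but they need not: if $v\in A$ is adjacent to $w\in B$ then $h$ swaps their partite sets, and chasing this yields the $K_{1,1}$ and $K_{2,2}$ alternatives of Theorem~\ref{thm:multipartite_achiral_3}, which your outline misses. Third, and most seriously, in the order-$\ge 4$ case your claim that vertices on $\ell\setminus\{O,\infty\}$ occur in ``$f$-swapped (hence same-partite-set) pairs'' is false: if $h$ interchanges two \emph{odd} partite sets $W_1,W_2$, then some $w_1\in W_1$ on $\ell$ is swapped with $h(w_1)\in W_2$, producing two \emph{adjacent} vertices on $\ell$. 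This is exactly where the ``two $q_i$ odd and equal'' conditions (2) and (3) of Theorem~\ref{thm:multipartite_achiral_1} come from, and the paper's argument turns on the observation that $K_{1,1,1,1}$ does not embed in a circle, so at most one such swapped pair can occur. Likewise, the side condition $G_1=\emptyset$ is not about adjacent pairs interchanged by $f$; it comes from the fact that for each $v\in V_1$ the midpoint of the edge $\overline{v\,g^2(v)}$ must lie on $\ell$, and these $2|V_1|$ midpoints compete with the $Q_i$ vertices and the $V_4$ edge for room on the circle.
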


\begin{proof} 
If $\gamma$ is not 3-connected, then either $\gamma = K_{n,1,1}$ for some $n \geq 1$, or $\gamma$ has fewer than three partite sets.  In either case, $\gamma$ can be expressed as $G_P + Q$ where $G_P$ is an outerplanar graph and $Q$ is a (possibly empty) set of vertices.  Thus $\gamma$ has one of the forms given in Theorem~\ref{thm:multipartite_achiral_3}. 

Thus we assume that $\gamma$ is a 3-connected graph which has an achiral embedding $\Gamma_1$ in $S^3$.  Then there is an orientation reversing homeomorphism $h_1$ of the embedding $(S^3,\Gamma_1)$.    Since $\gamma$ is 3-connected, it follows from Flapan~\cite{Flapan 1995} that there is a possibly different embedding $\Gamma_2$ of $\gamma$ in $S^3$ such that $(S^3,\Gamma_2)$ has a finite order, orientation reversing homeomorphism $h_2$.  We may express order$(h_2) = 2^a b$, where $a,b \in \mathbb{Z}$ and $b$ is odd.  Note that $a \geq 1$ because $h_2$ is orientation reversing.  Let $h = (h_2)^b$.  Then order$(h) = 2^a$, and $h$ is orientation reversing because $b$ is odd.  By Smith Theory~\cite{Smith 1939}, fix($h$) is either two points or a sphere.  

Suppose that $h$ pointwise fixes a sphere $P$.  We aim to prove that $\gamma$ has the form given in Theorem~\ref{thm:multipartite_achiral_3}.  Let $A$ and $B$ denote the two components of $S^3 - P$.  Since $h$ is orientation reversing, $h$ must interchange $A$ and $B$.  Observe that $h$ setwise fixes any edge that passes through $P$, so if two adjacent vertices are in separate components, they are interchanged by $h$ (and so their corresponding partite sets are interchanged as well).  It follows that a vertex in one component can be adjacent to at most one vertex in the other component.  

Suppose that there is a vertex $v$ in $A$ that is adjacent to a vertex $w$ in $B$.  Let $v$ be in the partite set $V$ and let $w$ be in the partite set $W$. Then $h$ interchanges $v$ and $w$ as well as $V$ and $W$, so neither set can have any vertices in $P$.   Furthermore, since $v$ and $w$ can be adjacent to at most one vertex in the complementary component of $S^3-P$, the partite set $W$ can have no additional vertices in $B$, the partite set $V$ can have no additional vertices in $A$, and no partite set other than $V$ or $W$ can have vertices in $A \cup B$.  Thus $V$ can have at most one additional vertex, and it must be in $B$, in which case $W$ has another vertex embedded in $A$.  Thus $V$ and $W$ both have one vertex or both have two vertices, and the vertices from the remaining partite sets must be embedded in $P$.  It follows that $\gamma$ can be expressed in the form $G_P + K_{1,1}$ or $G_P + K_{2,2}$, where $G_P$ is a planar graph.

If instead $A \cup B$ does not contain a pair of adjacent vertices, then $A \cup B$ can contain only vertices from a single partite set that is setwise invariant under $h$, so $\gamma = G_P + Q$ where $G_P$ is a planar graph and $Q$ is a set of vertices.  If $Q$ has an odd number of vertices, then one must be embedded in $P$, in which case $G_P$ must be an outerplanar graph.  In either case, $\gamma$ has the form in Theorem~\ref{thm:multipartite_achiral_3}.

For the remainder of the proof we assume that fix($h$) is two points.  As a consequence, $h$ cannot fix two adjacent vertices, since if it did, it would pointwise fix the edge between them.

Suppose that order$(h)=2$.  We aim to prove that $\gamma$ has the form in Theorem~\ref{thm:multipartite_achiral_2}, so toward a contradiction suppose that it does not.  Then one of the following apply: (i) $\gamma$ has at least two odd partite sets each with at least three vertices,  (ii) $\gamma$ has precisely one odd partite set with at least three vertices, and either exactly one or at least three partite sets with one vertex, or (iii) $\gamma$ has at least five partite sets with only one vertex.

Suppose that (i) holds, then let $V_1$ and $V_2$ be odd partite sets with at least three vertices.  If each $V_i$ was setwise invariant under $h$, then one vertex in each set would be pointwise fixed by $h$.  As this would violate our assumption that $h$ does not fix two adjacent vertices, $h$ must instead interchange $V_1$ and $V_2$.  However, for each of the vertices  $v \in V_1$, it now follows that $h$ fixes the midpoint of the edge $\overline{vh(v)}$, which contradicts the assumption that $|$\text{fix}$(h)| = 2$.  

Suppose that (ii) holds, then let $V_1$ be a partite set whose size is odd and contains at least three vertices.  No other set has the same number of vertices, so $V_1$ must be setwise invariant under $h$, and we let $v_1 \in V_1$ be a vertex which is fixed by $h$.  Then no vertex in any other partite set can be fixed, since $h$ cannot fix a pair of adjacent vertices.  The partite sets with one vertex must be interchanged in pairs by $h$, so there must be an even number of them (specifically, there cannot be just one).  Then if there are at least three partite sets that have one vertex, there are at least two such pairs of interchanged vertices.  In this case, $h$ fixes the midpoints of these two edges as well as $v_1$, which again contradicts $|$\text{fix}$(h)| = 2$.  

Finally, suppose that (iii) holds, then either all of the single vertex sets are interchanged in pairs, or one is pointwise fixed while the rest are interchanged.  If all are interchanged in pairs, then $h$ fixes the midpoint of the edge between each pair, of which there are at least three.  If instead one vertex is fixed, then $h$ also still fixes the midpoints of at least two pairs, so again $h$ fixes three points.  Thus, if order$(h) = 2$, then $\gamma$ has the form in Theorem~\ref{thm:multipartite_achiral_2}.

From now on, we assume that the order of $h$ is a multiple of 4.  We aim to prove that $\gamma$ has the form given in Theorem~\ref{thm:multipartite_achiral_1}.  Let $\ell= \mathrm{fix}(h^2)$.  Then $\ell$ is a circle since $h^2$ is orientation preserving and not the identity.  Furthermore, for every $i \geq 1$, $\mathrm{fix}(h^{2^i})=\ell$.

Every partite set in $\gamma$ is either setwise invariant under $h$, is interchanged with another partite set by $h$, or is cycled by $h$ with order a multiple of 4.  We consider these types of partite sets one at a time beginning with partite sets which are cycled by $h$ with order a multiple of 4.  Let $U_{1}, U_2, ..., U_r$ be representatives of each orbit of these partite sets under $h$.  Then each $U_i$ is cycled by $h$ with order $4k_i$.  For each $i$, let $\widehat{U_i}=U_i\cup h(U_i)\cup\dots\cup h^{k_i}(U_i)$ and let $g=h^{2^{a-2}}$.   Then $g$ has order 4 and cycles each $\widehat{U_i}$ with order 4.

Now define $V_1=\widehat{U_1}\cup \dots \cup \widehat{U_r}$.  It follows that the partite sets which are cycled with order a multiple of 4 are contained in $V_1\cup g(V_1)\cup g^2(V_1)\cup g^3(V_1)$. We define $G_1 \subseteq \gamma$ as the complete multipartite graph with vertices in $V_1$. Then the subgraph of $\gamma$ spanned by vertices in $V_1\cup g(V_1)\cup g^2(V_1)\cup g^3(V_1)$ can be expressed as $4G_1$.  Note that $h^{2^{a-1}}=g^2$ has order 2 and for each $v \in V_1 \cup g(V_1)$, the vertices $v$ and $g^2(v)$ are adjacent and interchanged by $g^2$.  Hence $g^2$ fixes the midpoint of each edge $\overline{vg^2(v)}$.  Thus the midpoints of $2|V_1|$ edges must be contained in $\ell$.

Next, we consider all those partite sets whose size is even which are interchanged with another partite set by $h$.  Let $T_1, T_2, ..., T_s$ be representatives of each orbit of these partite sets under $h$.  Let $V_2$ be the union of these sets of vertices.  Now all partite sets whose size is even which are interchanged with another partite set by $h$ are contained in $V_2\cup h(V_2)$.  Thus the subgraph of $\gamma$ spanned by vertices in $V_2\cup h(V_2)$ can be expressed in the form $2G_2$.

Now let $G_3$ denote the subgraph spanned by all vertices in partite sets that are setwise invariant under $h$ whose size is a multiple of 4.  This leaves odd partite sets that $h$ interchanges with another partite set and partite sets which are invariant under $h$ whose size is not a multiple of 4.  As we will show, both of these types of partite sets must have some of their vertices embedded on $\ell$.  

Let $W_1$ and $W_2$ denote odd partite sets such that $h(W_1) = W_2$ and $|W_1| = |W_2| = 2k+1$.  Then $|W_1 \cup W_2| = 4k+2$, so $W_1 \cup W_2$ contains a cycle of length 2.  Thus $h$ must interchange some vertex $w_1 \in W_1$ with some vertex $w_2 \in W_2$.  These vertices are fixed by $h^2$ and hence are embedded on $\ell$.  Thus the edge $\overline{w_1w_2}$ is also contained in $\ell$.  If there were an additional pair of odd partite sets interchanged by $h$, there would be another two adjacent vertices $w_3$ and $w_4$ on $\ell$.  However, $K_{1,1,1,1}$ does not embed in a circle, so four mutually adjacent points cannot be embedded on $\ell$.  Thus there is at most one pair of odd partite sets interchanged by $h$.  If there is such a pair of partite sets, let $V_4$ denote the union of the vertices in this pair, otherwise let $V_4 = \emptyset$.

Next, let $Q$ denote a partite set which is invariant under $h$ and whose size is not a multiple of 4. Then $Q$ has $4k + q$ vertices, where $1 \leq q \leq 3$.  Since the order of the cycles in $Q$ must divide order$(h)$, every cycle in $Q$ has order 1, 2, or a multiple of 4.   If $q = 1$, then $h$ fixes one vertex of $Q$.  If $q = 2$, then $h$ either fixes or interchanges two vertices of $Q$.  If $q = 3$, then $h$ fixes one vertex  in $Q$ and interchanges two others.  In any of these cases, at least $q$ vertices of $Q$ are embedded on $\ell$.  

Since at most three mutually adjacent points can be embedded in a circle, there are no more than three such partite sets $Q_i$ which are invariant under $h$ and whose size is not a multiple of 4.  If there were exactly three $Q_i$, then each $q_i = 1$, since $K_{1,1,1}$ is the only tripartite graph that embeds in a circle.  However, each $Q_i$ is setwise invariant and $h$ cannot fix a pair of adjacent vertices, so at most one $q_i = 1$.  Therefore there are at most two $Q_i$.  

If there are two $Q_i$, then either both $q_i = 2$, or one $q_i = 1$ and the other $q_i = 2$.  This is because at most one $q_i = 1$, and $K_{1,1}, K_{1,2},$ and $K_{2,2}$ are the only bipartite graphs that embed in a circle.  Thus, one of the following holds:  \begin{itemize}

\item There are two $Q_i$, and both $q_i=2$, or one $q_i = 2$ and one $q_i = 1$.

\item There is only one $Q_i$, and $q_i = 1, 2,$ or $3$.

\item There are no $Q_i$.

\end{itemize}

To prove that $\gamma$ has one of the forms given in Theorem~\ref{thm:multipartite_achiral_1}, we will identify which of the above configurations of the $Q_i$ are possible, according to whether $V_1$ and $V_4$ are empty.  In the case that both $V_1$ and $V_4$ are empty, all of the above configurations of the $Q_i$ are possible, since no edges or vertices of $\gamma$ besides those in the $Q_i$ must be embedded on $\ell$.  In this case, $\gamma$ satisfies condition (1), (4), or (5) in Theorem~\ref{thm:multipartite_achiral_1}.

Suppose that $V_1$ is empty and $V_4$ is nonempty.  Then there are two adjacent vertices $w_1, w_2 \in V_4$ which are embedded on $\ell$.  Since $\ell$ can contain at most three mutually adjacent vertices and $K_{1,1,1}$ is the only tripartite graph that embeds in a circle, there can be at most one $Q_i$ and it can have only one vertex on $\ell$ (i.e. $q_i = 1$).  In this case, $\gamma$ satisfies condition (2) or (3) in Theorem~\ref{thm:multipartite_achiral_1}.

Suppose that $V_1$ is nonempty and $V_4$ is empty.  Then there cannot be two $Q_i$ with both $q_i = 2$ because $K_{2,2}$ is homeomorphic to a circle, leaving no space on $\ell$ for the midpoints of the $2|V_1|$ edges.  Thus $\gamma$ satisfies condition (1) or (4) in Theorem~\ref{thm:multipartite_achiral_1}.

Finally, suppose that both $V_1$ and $V_4$ are nonempty.  Again since $V_4\not =\emptyset$ there are two adjacent vertices $w_1, w_2 \in V_4$ on $\ell$.  Hence there can be at most one $Q_i$ and it would need to have $q_i = 1$.  If there were such a $Q_i$, then $\ell$ would contain three mutually adjacent vertices, and the edges between them would leave no space on $\ell$ for the midpoints of the $2|V_1|$ edges.  Thus there can be no $Q_i$, and so $\gamma$ satisfies condition (2) in Theorem~\ref{thm:multipartite_achiral_1}.

It follows that $\gamma$ can be expressed in one of the forms given in Theorem~\ref{thm:multipartite_achiral_1}, Theorem~\ref{thm:multipartite_achiral_2}, or Theorem~\ref{thm:multipartite_achiral_3}.
\end{proof}
\bigskip

\renewcommand\bibname{References}

\end{document}